\newcommand{\rrvert}{\vert}
\newcommand{\llvert}{\vert}
\newtheorem{theorem}{Theorem}[section]
\newtheorem{question}{Question}[section]
\newtheorem{lem}[theorem]{Lemma}
\newtheorem{prop}[theorem]{Proposition}
\begin{document}
\begin{frontmatter}

\title{A simplified proof of the relation between scaling exponents in
first-passage percolation}
\pdftitle{A simplified proof of the relation between scaling exponents in
first-passage percolation}

\runtitle{Scaling relation in first-passage percolation}

\begin{aug}
\author[A]{\fnms{Antonio} \snm{Auffinger}\ead[label=e1]{auffing@math.uchicago.edu}}
\and
\author[B]{\fnms{Michael} \snm{Damron}\corref{}\ead[label=e2]{mdamron@math.princeton.edu}\thanksref{t1}}
\runauthor{A. Auffinger and M. Damron}
\pdfauthor{Antonio Auffinger and Michael Damron}
\affiliation{University of Chicago and Princeton University}
\address[A]{Department of Mathematics\\
University of Chicago\\
5734 S. University Avenue\\
Chicago, Illinois 60637\\
USA\\
\printead{e1}} 
\address[B]{
Department of Mathematics\\
Princeton University\\
Fine Hall, Washington Road\\
Princeton, New Jersey 08544\\
USA\\
\printead{e2}}
\end{aug}
\thankstext{t1}{Supported by an NSF Postdoctoral Fellowship.}

\received{\smonth{4} \syear{2012}}

%
\begin{abstract}
In a recent breakthrough work, Chatterjee [\textit{Ann. of Math. (2)}
\textbf{177} (2013) \mbox{663--697}] proved a long standing conjecture
that relates the transversal exponent $\xi$ and the fluctuation
exponent $\chi$ in first-passage percolation on $\mathbb{Z}^d$. The
purpose of this paper is to replace the main argument of Chatterjee
(2013) and give an alternative proof of this relation. Specifically, we
show that under the assumption that exponents defined in Chatterjee
(2013) exist, one has the relation $\chi\leq2 \xi-1$. One advantage of
our argument is that it does not require the ``nearly Gamma''
assumption of Chatterjee (2013).
\end{abstract}

%
\begin{keyword}[class=AMS]
\kwd{60K35}
\kwd{82B43}
\end{keyword}
\begin{keyword}
\kwd{First-passage percolation}
\kwd{KPZ relation}
\end{keyword}

\end{frontmatter}

\section{Introduction}\label{sec1}

We consider first-passage percolation (FPP) on $\mathbb{Z}^d$ with
nonnegative i.i.d. weights $(\tau_e)$ on edges with common distribution
$\mu$. For a review and a description of known results on the model we
refer the reader to \mbox{\cite{Blair-Stahn,Howard,Kestensurvey}}.

The random variable $\tau_e$ is called the \textit{passage time} of the
edge $e$, a nearest-neighbor edge in $\mathbb{Z}^d$. A \textit{path}
$\gamma$ is a sequence of edges $e_1, e_2,\ldots$ in $\mathbb{Z}^d$
such that for each $n \geq1$, $e_n$ and $e_{n+1}$ share exactly one
endpoint. For any finite path $\gamma$ we define the \textit{passage
time} of $\gamma$ to be $\tau(\gamma)=\sum_{e \in\gamma} \tau _e$, and
given two points $x,y \in\mathbb{R}^d$ we set
\[
\tau(x,y) = \inf_{\gamma} \tau(\gamma).
\]
The infimum is over all paths $\gamma$ that contain both $x'$ and $y'$,
and $x'$ is the unique vertex in $\mathbb{Z}^d$ such that $x \in x' +
[0,1)^d$ (similarly for $y'$). A minimizing path for $\tau(x,y)$ is
called a \textit{geodesic} from $x$ to $y$. We assume throughout the
paper that $\mu$ has no mass larger than or equal to $p_c(d)$, the
critical probability of bond percolation, at the infimum of its
support.

Consider a geodesic from the origin to a point $v$ with passage time
$\tau(0,v)$. One of the central questions \cite{Howard,Kestensurvey} in
this model (and in related ones) is to prove the following statement.
There exists an intrinsic relation between the magnitude of deviation
of $\tau(0,v)$ from its mean and the magnitude of deviation of the
geodesic $\tau(0,v)$ from a straight line joining $0$ and $v$. This
relation is \textit{universal}; that is, it is independent of the
dimension $d$ and of the law of the weights (as long they satisfy
certain moment assumptions).

The fluctuations of the passage time $\tau(0,v)$ about $\mathbb
{E}\tau(0,v)$ should be of order~$|v|^\chi$, where $\chi$ is called the
\textit{fluctuation exponent}. Analogously, a~\textit{transversal
exponent} $\xi$ should measure the maximal Euclidean distance of a
geodesic from $0$ to $v$ from the straight line that joins $0$ to $v$.
The intrinsic relation described above should be given as
%
%
\begin{equation}
\label{KPZ} \chi= 2 \xi-1.
\end{equation}
Despite numerous citations (both in mathematics and physics papers
\cite{KZ,Krug,LNP,NewmanPiza,Wuthrich}) and the mystery surrounding
(\ref{KPZ}), the existence and the ``correct'' definition of these
exponents is still not established, and these issues form part of the
above conjecture.

For a certain definition of the exponents, the inequality $ \chi\geq2
\xi-1 $ was proved and understood in the 1995 work of Newman and Piza
\cite{NewmanPiza}. The other inequality, however, has remained elusive
for more than twenty years. A~recent work of \mbox{Chatterjee
\cite{Sourav}} proposed a stronger definition of the exponents that
allows a complete proof of (\ref{KPZ}). One of its main contributions
was to give proof of the inequality $ \chi\leq2 \xi-1 $. The proof
relies on a construction similar to that in \cite{SouravPartha}. One
first breaks a geodesic into smaller segments and then uses an
approximation scheme to compare the passage time to a sum of nearly
i.i.d. random variables. The proof is then a trade-off between
minimizing the error while maximizing the variance of the passage time.
Assuming that the distribution is ``nearly Gamma'' (see
\cite{BenaimRossignol} for a~definition), the optimization can be
achieved by choosing different parameters in the approximation.

The main goal of this paper is to show a simple idea that enables us to
replace the main argument of \cite{Sourav} to prove the inequality $
\chi\leq2 \xi-1 $ in the case $\chi>0$. Our proof does not use a
``nearly Gamma'' assumption on the passage times, and so it applies to
all distributions for which Chatterjee's exponents exist. Furthermore,
this idea allows us to extend our theorem to related models like
directed polymers in random environments and last passage percolation.
These questions will be addressed in a forthcoming paper \cite{AD13}. A
secondary goal of this paper is to explain how the simplicity of our
proof could allow one to extract weaker assumptions on the model to
guarantee that different versions of (\ref{KPZ}) hold. It is important
to note that the cylinder construction we use has appeared in both
\cite{Johansson2} and \cite{Wuthrich} to prove two-dimensional versions
of the scaling relation for related models in either exactly solvable
or Poissonized cases.

We close this section by discussing earlier works related to
(\ref{KPZ}) and sketching the idea of Newman and Piza \cite{NewmanPiza}.
In 1993, Kesten \cite{Kesten} showed that $\chi\leq\frac{1}{2}$. In
1996, Licea, Newman and Piza \cite{LNP} proved\vspace*{-1pt} that
$\xi\geq\frac{1}{2}$ in all dimensions for one definition of $\xi$ and
$\xi\geq \frac{3}{5}$ in two dimensions for another definition. In
Section~\ref{cylinder}, we use their cylinder construction \cite{LNP}
as a fundamental tool to obtain the proof of~(\ref{KPZ}). The most
well-known conjecture after (\ref{KPZ}) is, however, that in two
dimensions one should have the exact values $\xi= \frac{2}{3}$ and
$\chi= \frac{1}{3}$.

The proof of Newman and Piza \cite{NewmanPiza} was based on an argument of
Aizemann and Wehr \cite{AW} (in a different context) with important
contributions from Alexander \cite{Alexander} and Kesten \cite{Kesten}.
Their main tool was an assumption of curvature of the limit shape
$\mathcal{B}_\mu$ [defined in (\ref{limitshape})] and the following
argument; see Figure~\ref{fig1}. Let $v$ be a unit vector. Assume that
a geodesic from $0$ to $nv$ leaves a box of height $n^\xi$ centered on
the straight line that joins the origin to $nv$ through a point $w$.
Furthermore, assume that the limit shape has shape curvature $2$ in the
direction of $v$; that is, there exists a positive constant $c$ such
that
%
%
\begin{equation}
\label{eqcurvature2} g(v):= \lim_{n\rightarrow\infty} \frac{\mathbb{E}(\tau
(0,nv))}{n}\quad\mbox{satisfies}\quad c|z|^2 \leq\bigl|g(v+z)-g(v)\bigr|
\end{equation}
for all vectors $z$ orthogonal to $v$ of small length. (A precise
definition of the shape curvature will be given in Section~\ref{comments}.) The passage time being additive in a geodesic implies that
%
%
\begin{equation}
\tau(0,nv) = \tau(0,w) + \tau(w,nv).
\end{equation}
Alexander's subadditive approximation theorem \cite{Alexander} [see
(\ref{lemalexander})] guarantees that $\tau(0,w) + \tau(w,nv) -
\mathbb{E}\tau(0,nv)$ is within $O(n^{\chi})$ of $g(w) + g(nv-w)
-g(nv)$, which is equal to $g(\lambda v -(\lambda v-w)) + g((n-\lambda
) v -(w-\lambda v)) -g(nv)$; see Figure~\ref{fig1}. By the curvature
assumption (\ref{eqcurvature2}) and by linearity of $g$ in the
direction of~$v$ this term is of order at least $\frac{|w-\lambda
v|^2}{n} = n^{2\xi-1}$ regardless of the choice of $w$. This
contradicts the fact that $\tau(0,nv) - \mathbb{E}\tau(0,nv)$ has order
$n^\chi$ if $\chi< 2\xi-1$, proving the lower bound.

%
\begin{figure}

\includegraphics{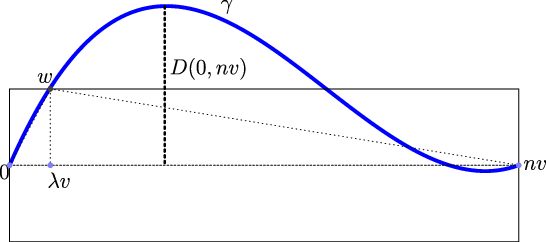}

\caption{A geodesic $\gamma$ from $0$ to $nv$ leaving a box of height
$n^{\xi}$ at a point $w$.}\label{fig1}
\end{figure}

\subsection{Outline of the paper}

In the next section, we state our main result and give a sketch of the
proof. Next, in Section~\ref{comments}, we discuss how our proof could
give rise to important extensions. In Section~\ref{proof} we prove
Theorem \ref{theorem1}. 

\section{Results}

Let $D(0,v)$ be the maximum Euclidean distance between the set of all
geodesics from $0$ to $v$ and the line segment joining $0$ to $v$. We
say that the FPP model has \textit{global exponents in the sense of
Chatterjee} if there exist real numbers $\chi_a$, $\chi_b$ and $\xi_a$,
$\xi_b$ such that:
\begin{enumerate}[(2)]
\item[(1)] For each choice of $\chi' > \chi_a$ and $\xi' > \xi_a$,
    there exists $\alpha>0$ so that
%
%
\begin{eqnarray}
\label{above} \sup_{v \in\mathbb{Z}^d \setminus\{0\}} \mathbb{E}\exp \biggl( \alpha
\frac{|\tau(0,v)-\mathbb{E}\tau(0,v)|}{|v|^{\chi'}} \biggr) &<& \infty\quad\mbox{and}
\nonumber\\[-8pt]\\[-8pt]
\sup_{v \in\mathbb{Z}^d \setminus\{0\}} \mathbb{E}\exp \biggl( \alpha\frac{D(0,v)}{|v|^{\xi'}}
\biggr) &<& \infty.\nonumber
\end{eqnarray}

\item[(2)] For each choice of $\chi''< \chi_b$ and all $\xi''< \xi_b$,
%
%
\begin{equation}
\label{below} \inf_{v \in\mathbb{Z}^d \setminus\{0\}} \frac{\operatorname{Var}(\tau
(0,v))}{|v|^{2\chi''}} >0\quad\mbox{and}\quad \inf
_{v \in\mathbb{Z}^d
\setminus\{0\}} \frac{\mathbb{E}(D(0,v))}{|v|^{\xi''}} >0.
\end{equation}
\end{enumerate}

%
\begin{remark}\label{remtrivial} It is not difficult to prove (see
\cite{Sourav}) that if such exponents exist, then $0\leq\xi_b \leq
\xi_a \leq1$ and $0\leq\chi_b \leq\chi_a \leq\frac{1}{2}$.
\end{remark}

Our main result is the following.

%
\begin{theorem}\label{theorem1} Assume that the FPP model has global
exponents in the sense of Chatterjee and $\chi:= \chi_a = \chi_b
>0$. Then
%
%
\begin{equation}
\label{KPZu} \chi\leq2 \xi_a -1.
\end{equation}
\end{theorem}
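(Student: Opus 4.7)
The strategy is a proof by contradiction: assuming $\chi > 2\xi_a - 1$, I will build a lower bound on $\mathrm{Var}(\tau(0, nv))$ that overshoots the upper bound implied by $\chi_a = \chi$ via \eqref{above}. The core idea is to use the cylinder construction of Licea--Newman--Piza as a device for turning transversal confinement (a consequence of the assumed $\xi_a$) into effective independence of passage time contributions across sub-segments.

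First, I choose $\xi' \in (\xi_a, (1+\chi)/2)$, which is possible precisely because $\chi > 2\xi_a - 1$. The upper tail estimate in \eqref{above} for $\xi'$ forces every geodesic from $0$ to $nv$ into the cylinder $C_n$ of radius $n^{\xi'}$ around $[0, nv]$ with probability exponentially close to $1$. Next, I partition $[0, nv]$ into $M$ consecutive slabs of length $L = n/M$, placing at each slab interface a perpendicular "gate" which is a $(d-1)$-dimensional disk of radius $n^{\xi'}$; for each slab $j$ I introduce the minimum passage time $T_j(p, q)$ from an entry-gate point $p$ to an exit-gate point $q$ using only edges inside that slab. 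The two key structural facts are then: the families $\{T_j(\cdot, \cdot)\}_j$ are independent (disjoint edge supports); and on the cylinder event, the restriction of the true geodesic to each slab is itself a geodesic between gate points, giving the exact identity $\tau(0, nv) = \min_{p_1, \ldots, p_{M-1}} \sum_{j=1}^M T_j(p_{j-1}, p_j)$ with the convention $p_0 = 0$, $p_M = nv$. Combining this identity with the fact that the cylinder event has probability tending to $1$, variance estimates for $\tau(0, nv)$ are transferred to variance estimates for the concatenated slab minimum.

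From here the plan is a variance computation. For each fixed choice of gate points, the lower bound in \eqref{below} applied at scale $L$ together with independence across slabs yields $\mathrm{Var}\bigl(\sum_j T_j(p_{j-1}, p_j)\bigr) \gtrsim M L^{2\chi} = M^{1-2\chi} n^{2\chi}$. Choosing $M = n^{\delta}$ for some small $\delta > 0$, this provides $\mathrm{Var}(\tau(0, nv)) \gtrsim n^{2\chi + \delta(1-2\chi)}$ (up to polylogarithmic factors), which contradicts the upper bound $\mathrm{Var}(\tau(0, nv)) \leq Cn^{2\chi'}$ that \eqref{above} delivers for any $\chi' \in (\chi, \chi + \delta(1-2\chi)/2)$. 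The delicate step, and what I expect to be the main obstacle, is the outer minimization over gate configurations: \emph{a priori} the min over the $\sim n^{(d-1)\xi'(M-1)}$ gate combinations could collapse the independent-sum variance. Here the constraint $\xi' < (1+\chi)/2$ is used in an essential way, since it controls the geometric "spread" of competitive gate configurations; the monotonicity and comparison properties of the cylinder-restricted passage time are then leveraged to show that the minimum degrades the variance at most by polylogarithmic factors, which is enough to close the argument.
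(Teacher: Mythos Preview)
Your approach has a genuine gap, and it is precisely the step you flag as ``the main obstacle'': passing from the variance of $\sum_j T_j(p_{j-1},p_j)$ for \emph{fixed} gate points to the variance of the minimum over all gate configurations. You assert that ``monotonicity and comparison properties'' together with $\xi' < (1+\chi)/2$ will ensure the minimum degrades the variance by at most polylogarithmic factors, but you give no argument for this, and in general a minimum over a super-polynomial family (here roughly $\exp\bigl((d-1)\xi' M\log n\bigr)$ gate choices) can collapse fluctuations dramatically. Your slab decomposition is in fact essentially Chatterjee's original strategy in \cite{Sourav}, and it is exactly this entropy--fluctuation trade-off that forced him to impose the \emph{nearly Gamma} hypothesis; the whole point of the present paper is to avoid that hypothesis, so an argument that runs into the same obstacle cannot serve as a proof here. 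As a further sanity check: if the minimization step truly cost only polylogs, your bound $\Var\,\tau(0,nv) \geq c\,M^{1-2\chi''}n^{2\chi''}$ would hold for any $M=n^\delta$ with $\delta<1$, and sending $\delta\uparrow 1$ would force $\chi\geq 1/2$ without any reference to $\xi_a$ --- a conclusion not expected to hold in general.

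The paper's argument is entirely different and never decomposes a geodesic into sub-segments. It compares two \emph{parallel} passage times $T_1=\tau(0,nx_0)$ and $T_2=\tau(4n^{\xi'}x_1,\,nx_0+4n^{\xi'}x_1)$ across disjoint cylinders. Near-independence gives the lower bound $\Var(T_1-T_2)\geq c\,n^{2\chi''}$. The new idea is the matching \emph{upper} bound: by grafting short geodesic end-pieces of length $n^\beta$ (with $\xi'<\beta<1$) from one cylinder onto the middle of the geodesic in the other, one exhibits a competitor path showing that $|T_1-T_2|$ is controlled by passage-time oscillations inside two boxes of length $n^\beta$ and width $O(n^{\xi'})$. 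Proposition~\ref{prop: sourav}, via Lemma~\ref{lem: alexander} and the curvature bound \eqref{eq: curvatureassumption}, then gives $\Var(T_1-T_2)\leq C\,n^{2(2\xi'-\beta)}+C\,n^{2\beta\chi'}$. Comparing the two bounds and sending $\beta\uparrow 1$, $\chi',\chi''\to\chi$, $\xi'\downarrow\xi_a$ yields $\chi\leq 2\xi_a-1$. No minimization over intermediate points is ever needed.
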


%
\begin{remark}
Our proof does not require one to assume that the distribution $\mu$ of
the $\tau_e's$ is \textit{nearly Gamma} as in \cite{Sourav}. The case
$\chi=0$ was treated with a separate argument in \cite{Sourav}. It does
not require this assumption on $\mu$, and although it is stated for
continuous distributions only, the arguments hold under our condition
on the support of $\mu$.
\end{remark}

In \cite{Sourav} it was shown using the ideas of Newman and Piza
\cite{NewmanPiza} and Howard \cite{Howard} that for this definition of
exponents, the lower bound holds,
%
%
\begin{equation}
\chi_a \geq2 \xi_b -1.
\end{equation}
This fact, combined with Theorem \ref{theorem1} and with the assumption
$\xi_a = \xi_b$, implies:

%
\begin{theorem} Assume that the FPP model has global exponents in the
sense of Chatterjee with $\chi:= \chi_a = \chi_b$ and $\xi:= \xi_a =
\xi_b$. Then (\ref{KPZ}) holds.
\end{theorem}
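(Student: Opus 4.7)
The plan is to derive the final theorem as a direct corollary of two already-established inequalities. First, Theorem~\ref{theorem1}, just proved in this paper, gives the upper bound $\chi \leq 2\xi_a - 1$ under the assumption $\chi := \chi_a = \chi_b > 0$. Second, the complementary lower bound $\chi_a \geq 2\xi_b - 1$ has already been established in \cite{Sourav} (via the Newman--Piza--Howard argument, as noted in the paragraph preceding the theorem statement). Substituting the symmetric-exponent assumptions $\chi_a = \chi_b = \chi$ and $\xi_a = \xi_b = \xi$ into both inequalities yields $2\xi - 1 \leq \chi \leq 2\xi - 1$, i.e., the desired equality $\chi = 2\xi - 1$ announced in \eqref{KPZ}.

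The only item that requires attention is the degenerate case $\chi = 0$, which is excluded from the hypothesis of Theorem~\ref{theorem1}. Here I will invoke the separate argument in \cite{Sourav}, which (as emphasized in the remark following Theorem~\ref{theorem1}) establishes the KPZ relation in the $\chi = 0$ regime without any `nearly Gamma' assumption, and whose proof goes through under the support condition on $\mu$ imposed in this paper. Once that case is delegated to \cite{Sourav}, no substantive obstacle remains: the proof reduces to a one-line algebraic substitution into the two bounds, and the statement follows.
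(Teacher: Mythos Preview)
Your proposal is correct and mirrors the paper's own argument: combine Theorem~\ref{theorem1} (giving $\chi \leq 2\xi_a - 1$ when $\chi>0$) with the lower bound $\chi_a \geq 2\xi_b - 1$ from \cite{Sourav}, then substitute $\chi_a=\chi_b=\chi$ and $\xi_a=\xi_b=\xi$. Your explicit treatment of the $\chi=0$ case via the remark following Theorem~\ref{theorem1} is exactly what the paper intends, though the paper leaves that step slightly implicit.
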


\subsection{Sketch of the proof}
In this subsection we sketch the proof of Theorem~\ref{theorem1}. It
will follow from the picture below.
%
\begin{figure}

\includegraphics{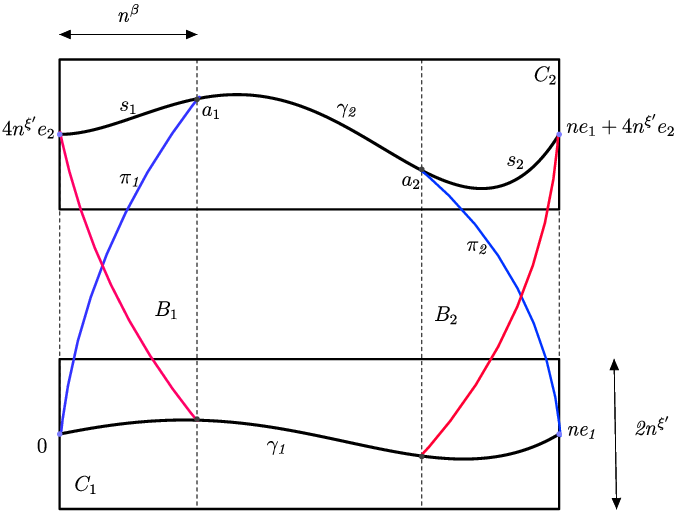}

\caption{$B_1$ and $B_2$ are boxes of length $n^{\beta}$ and radius
$3n^{\xi'}$ and $\pi_i, i=1,2$ are geodesics joining the points $0$
to $a_1$ and $a_2$ to $ne_1$, respectively. Since $\gamma_1$ is a
geodesic, the path from $0$ to $ne_1$ using $\pi_1$, $\pi_2$ and the
middle part of $\gamma_2$ has larger passage time than $\gamma_1$.
Note $\beta$ is chosen larger than $\xi'$ so the boxes $B_1$ and
$B_2$ are not drawn to scale.}\label{figMichael}
\end{figure}

Look at the cylinders $C_1$ and $C_2$ in Figure~\ref{figMichael}. They
are both of length $n$ and radius $n^{\xi'}$ for some $\xi'>\xi_a$. The
top cylinder is identical to the bottom but shifted up (in direction
$e_2$, the second coordinate vector) by $4n^{\xi'}$. The dark paths
$\gamma_1$~and~$\gamma_2$ joining $0$ to $ne_1$ and their shifted
points are geodesics.

Since we chose $\xi' > \xi_a$ it is possible to show that the passage
times $\tau(\gamma_1)$ and $\tau(\gamma)$ are almost independent. Using
(\ref{below}), this implies that for any $\chi''<\chi$ and $n$ large,
%
%
\begin{equation}
\label{eqslb} n^{2\chi''}\leq\operatorname{Var}\bigl(\tau(
\gamma_2) - \tau(\gamma_1)\bigr).
\end{equation}

Assuming $\xi_a<1$ and $\xi' < \beta<1$, build two cylinders, $B_1$ and
$B_2$, of length $n^\beta$ and radius $3n^{\xi'}$ as in the picture.
Let $a_1$ and $a_2$ be the last and first points of intersection of the
geodesic $\gamma_2$ with these cylinders. Consider the geodesics
$\pi_1$ joining $0$ to $a_1$ and $\pi_2$ joining $a_2$ to $ne_1$ (in
blue). Note that the concatenation of $\pi_1$, the piece of $\gamma_2$
from $a_1$ to $a_2$ and $\pi_2$ is a path from $0$ to $ne_1$.
Therefore, if $s_1$ and $s_2$ are the other two parts of $\gamma_2$ (as
in the picture),
\[
\tau(\gamma_1) \leq\tau(\pi_1) + \tau(a_1,a_2)
+\tau(\pi_2) = \tau(\pi_1) + \bigl(\tau(
\gamma_2) - \tau(s_1) - \tau(s_2) \bigr) +
\tau(\pi_2),
\]
which implies
%
%
\begin{equation}
\label{se1} \tau(\gamma_1) - \tau(\gamma_2) \leq\tau(
\pi_1) - \tau(s_1) + \tau(\pi_2) -
\tau(s_2).
\end{equation}

The difference $\tau(\pi_1) - \tau(s_1)$ is bounded above by
\[
X:= \max_{u,v,u',v' \in\partial B_1 } \tau(u,v) - \tau\bigl(u',v'
\bigr),
\]
where $u$ and $u'$ are points on the left boundary of the box $B_1$
while $v$ and $v'$ are points on the right boundary of the box. Using
the box $B_2$ one can similarly bound the difference of $\tau(\pi_2) -
\tau(s_2)$ by a random variable with same distribution as $X$. Using
the red paths instead of the blue ones and reversing the roles of
$\gamma_1$ and $\gamma_2$ in (\ref{se1}), we get an inequality for the
absolute value of the left-hand side of (\ref{se1}). Combining these
bounds,
\[
\operatorname{Var}\bigl(\tau(\gamma_2) - \tau(\gamma_1)
\bigr) \leq4 \mathbb {E}X^2.
\]

For $\mathbb{E}X^2$ it suffices to bound (independently of $u$ and
$v$) the second moment of
%
%
\begin{eqnarray}
&& \bigl|\tau(u,v) - \tau\bigl(0,n^\beta e_1\bigr)\bigr|\nonumber
\\
&&\qquad \leq \bigl|\tau(u,v)-\mathbb{E}\tau (u,v)\bigr| + \bigl| \tau\bigl(0,n^\beta e_1\bigr) -
\mathbb{E}\tau\bigl(0,n^\beta e_1\bigr)\bigr|
\nonumber\\[-8pt]\\[-8pt]
&&\quad\qquad{}+ \bigl|g(v-u) -\mathbb{E}\tau(u,v)\bigr| + \bigl| g(ne_1) - \mathbb{E}\tau
\bigl(0,n^\beta e_1\bigr)\bigr|\nonumber
\\
&&\quad\qquad{}+ \bigl|g(v-u)-g\bigl(n^\beta e_1\bigr)\bigr|.\nonumber
\end{eqnarray}
The first two lines above are bounded above by $n^{\beta\chi'}$ for any
$\chi'>\chi$ (by assumption and Alexander's subadditive approximation)
while the third is of order $n^{2\xi' - \beta}$ by the curvature of the
limit shape $\mathcal{B}_\mu$; see (\ref{limitshape}). This implies by
(\ref{eqslb}) and the above computation,
%
%
\begin{equation}
n^{2\chi''} \leq\operatorname{Var}\bigl(\tau(\gamma_2) - \tau(
\gamma _1)\bigr) \leq C \bigl(n^{2\beta\chi'} + n^{2 (2\xi' - \beta)}
\bigr).
\end{equation}
Now choosing $\chi''$ and $\chi'$ close enough to $\chi$ and
recalling that $\beta<1$, we get $n^{2\chi''} \leq C n^{2 (2\xi' -
\beta)}$ for large $n$. This implies $\chi'' \leq2\xi' - \beta$.
Taking $\beta\uparrow1$, $\chi'' \uparrow\chi$ and $\xi'
\downarrow\xi$ ends the proof.

\section{Extensions}\label{comments}

In this section we discuss how to improve Theorem \ref{theorem1}. There
are two main directions. The first one is to establish a relation for
directionally defined exponents. This would weaken our assumptions,
allowing us to prove the existence of both exponents more easily. The
second is to add shape curvature into relation (\ref{KPZ}).

One can define the exponents $\xi_a, \chi_a, \chi_b$ directionally
as follows. For a unit vector $u$, define the cylinder $\mathcal C (u,
a, b)$ of length $a$ and radius $b$ in the direction $u$ as the set of
points in $\mathbb{R}^d$ at most $\ell_\infty$ distance $b$ away
from the line segment connecting 0 to $au$. We denote $\partial^f
\mathcal C(u, a, b)$ as the set of all points $x \in\mathcal C (u, a,
b) $ with $|\langle u,x \rangle| \geq a$.

The exponent $\xi_{a}^u$ is now defined as in (\ref{above}) with $v$
taken as a nonzero multiple of $u$ instead of an arbitrary vector in
$\mathbb{Z}^d \setminus\{0\}$. $\chi_a^u$ is defined similarly to
(\ref{above}) but as a function of $\xi_a^u$; it is the smallest real
number such that for any $\chi'>\chi_{a}^{u}$, there exists $\alpha$ so
that
\[
\inf_{\xi' >\xi_a^u} \sup_{n\in\mathbb{N}} \sup
_{v \in\mathcal
\partial^f C(u,n,n^{\xi'})} \mathbb{E}\exp \biggl( \alpha\frac
{|\tau(0,v)-\mathbb{E}\tau(0,v)|}{|v|^{\chi'}} \biggr) <
\infty.
\]
$\chi_b^u$ is defined as the largest real number such that for any
$\chi'' < \chi_b^u$,
\[
\inf_{n\in\mathbb{N}} \frac{\operatorname{Var}\tau
(0,nu)}{n^{2\chi''}} >0.
\]

One can go through the proof of Theorem \ref{theorem1} and see that the
scaling relation~(\ref{KPZ}) holds with these new exponents as long one
is able to prove that Alexander's subadditive exponent can be made
directional. Namely, the question becomes the following:

%
\begin{question}\label{question1} Is it true that for any $\chi'>\chi
_{a}^u$ there exists $\xi' >\xi_a^u$ and a~constant $C=C(\chi',\xi ')
>0$ such that for all $x \in\partial^f \mathcal C(u,n,n^{\xi'})$ and
all $n$,
\[
\bigl|\mathbb{E}\tau(0,x) - g(x)\bigr| \leq C|x|^{\chi'}.
\]
\end{question}

Another way to generalize the relation (\ref{KPZ}) is to add curvature.
Let
%
%
\begin{equation}
\label{limitshape} \mathcal{B}_\mu:= \bigl\{x \in\mathbb{R}^d,
g(x)\leq1\bigr\}
\end{equation}
be the limit shape of the model; see \cite{Kestensurvey}. Let $u$ be a
unit vector of $\mathbb{R}^d$ and let $H_0$ be a hyperplane such that
$u+H_0$ is tangent to $g(u) \mathcal{B}_\mu$ at $u$. We introduce a
third exponent, called the \textit{curvature exponent} as follows.

%
\begin{definition}\label{def1}
The curvature exponent $\kappa^u$ in the direction $u$ is a real
number such that there exist positive constants $c$, $C$ and
$\varepsilon$ such that for any $z \in H_0$ with $|z|<\varepsilon$,
one has
%
%
\begin{equation}
\label{eqcurvatureassumption} c|z|^{\kappa^u} \leq g(u+z)-g(u) \leq
C|z|^{\kappa^u}.
\end{equation}
\end{definition}

The directional approach mentioned above together with the definition
of the curvature exponent allows us to generalize relation (\ref{KPZ})
to one that includes all three of these exponents. Assume that
Question~\ref{question1} is answered affirmatively and that $\chi^u:=
\chi _a^u = \chi_b^u$ $(\geq0)$. Then it would follow directly from the
proof of Theorem~\ref{theorem1} that (\ref{KPZu}) generalizes to
%
%
\begin{equation}
\label{ADrelation} \chi^u \leq\kappa^u
\xi^u_a - \bigl(\kappa^u -1\bigr).
\end{equation}
Moreover, if $\xi^u:=\xi_a^{u} = \xi_b^{u}$, then one would have
%
%
\begin{equation}
\label{ADrelation2} \chi^u = \kappa^u \xi^u -
\bigl(\kappa^u -1\bigr).
\end{equation}

%
\begin{remark} Note that when $\kappa^u = 2$ and the exponents are
global, (\ref{ADrelation2}) is the same as (\ref{KPZ}). This is
believed to be true in the case where the weights $\tau$ have
a~continuous distribution with finite exponential moments. It would be
of interest to find examples, maybe of other growth models, where
(\ref{ADrelation}) holds for $\kappa^u \neq2$.
\end{remark}

%
\begin{remark} It is unclear if Chatterjee's exponents exist and, if
so, what the implications would be. For example, existence immediately
implies that $\kappa^u \leq2$ in all directions where $\kappa^u$ is
defined. In particular the limit shape can not contain flat pieces as
in \cite{DurrettLiggett}. However, if the statement in
Question~\ref{question1} holds and if one uses directional exponents
(provided they exist), then it would be possible to show that the upper
bound in (\ref{eqcurvatureassumption}) holds for all $\kappa$ (this is
true, for example, if there is a flat edge in direction $u$) if and
only if $\xi_a^u=1$.
\end{remark}

\section{\texorpdfstring{Proof of Theorem \protect\ref{theorem1}}
{Proof of Theorem 2.1}}\label{proof}
\subsection{Preliminary lemmas}
Recall the definition of the function $g$ from (\ref{eqcurvature2}). We
first state a bound on the ``nonrandom fluctuations'' from
\cite{Alexander}. The proof of the lemma, as stated, can be found in
\cite{Sourav}.

%
\begin{lem}\label{lemalexander}
For any $\chi'>\chi_a$, there exists $C_1 = C_1(\chi')>0$ such that for
all $x \in\mathbb{R}^d$,
\[
\bigl|\mathbb{E}\tau(0,x)- g(x)\bigr| \leq C_1|x|^{\chi'}.
\]
\end{lem}

For a unit vector $x_0$, let $H_0$ be as in Definition \ref{def1}
(taking $u=x_0)$. 
For $m,n \geq1$ and $i=1,2$, set
\[
S_i(x_0; m,n) = \bigl\{x \in(i-1)nx_0+
H_0\dvtx  \bigl|x-(i-1)nx_0\bigr| \leq m\bigr\}
\]
and
\[
X(x_0;m,n) = \mathop{\max_{v_1,v_2 \in S_1(x_0;m,n)}}_{w_1,w_2 \in
S_2(x_0;m,n)} \bigl|
\tau(v_1,w_1)-\tau(v_2,w_2)\bigr|.
\]

The following proposition is a slight modification of the arguments in
\cite{Sourav}. For a random variable $G$, write $\|G\|_2$ for the $L^2$
norm $(\mathbb{E}G^2)^{1/2}$.
%
\begin{prop}\label{propsourav}
Let $|x_0|=1$, and assume (\ref{eqcurvatureassumption}) holds for some
$C_{x_0}$, $\kappa$~and~$\varepsilon_{x_0}$. For each $\chi'>\chi _a$
there exists $C_2 = C_2(d,\chi')$ such that if $m,n$ have $m \leq
(\varepsilon_{x_0}/2\sqrt{d-1}) n$, then
\[
\bigl\|X(x_0;m,n)\bigr\|_2 \leq C_2n^{1-\kappa}
m^\kappa+ C_2n^{\chi'}.
\]
\end{prop}

\begin{pf}
By the triangle inequality, it suffices to bound the variable $Y$,
\[
Y(x_0;m,n) = \mathop{\max_{v \in S_1(x_0;m,n)}}_{w \in S_2(x_0;m,n)} \bigl|
\tau(v,w)-\tau(0,nx_0)\bigr|.
\]
For $v \in S_1(x_0;m,n)$ and $w \in S_2(x_0;m,n)$, the idea is to use
the following decomposition:
%
%
\begin{eqnarray}
&& \bigl|\tau(0,nx_0)-\tau(v,w)\bigr|\nonumber
\\
\label{eqsummand1} &&\qquad \leq\bigl|\tau(0,nx_0)-\mathbb{E}
\tau(0,nx_0)\bigr| + \bigl|\tau(v,w)-\mathbb {E}\tau(v,w)\bigr|
\\
\label{eqsummand2} &&\quad\qquad{} + \bigl|\mathbb{E} \tau(0,nx_0) -
g(nx_0)\bigr| + \bigl|\mathbb{E} \tau(v,w) - g(w-v)\bigr|
\\
\label{eqsummand3} &&{}\quad\qquad{} + \bigl|g(nx_0)-g(w-v)\bigr|.
\end{eqnarray}

We first estimate (\ref{eqsummand3}),
\[
\bigl|g(nx_0) - g(w-v)\bigr| = n \bigl|g(x_0)- g\bigl(x_0
+ (w-v)/n - x_0\bigr)\bigr|.
\]
By assumption, $|(w-v)/n - x_0| = (1/n) |w-v - nx_0| \leq2(m/n) \sqrt
{d-1} \leq\varepsilon_{x_0}$. Therefore, we can apply
(\ref{eqcurvatureassumption}) and find $C_3$ such that
%
%
\begin{equation}
\label{eqsummand3bound} \bigl|g(nx_0)-g(w-v)\bigr| \leq C_{x_0}n
\bigl|(w-v)/n - x_0\bigr|^\kappa\leq C_3 n^{1-\kappa}
m^\kappa.
\end{equation}

For (\ref{eqsummand2}), we note that $|w-v| \leq2n$ for all $w,v$. So
by Lemma~\ref{lemalexander},
%
%
\begin{equation}
\label{eqsummand2bound} \bigl|\mathbb{E} \tau(0,nx_0) -
g(nx_0)\bigr| + \bigl|\mathbb{E} \tau(v,w) - g(w-v)\bigr| \leq3C_1
n^{\chi'}.
\end{equation}
We turn to contributions to $Y(x_0;m,n)$ from terms in
(\ref{eqsummand1}). Pick $\hat{\chi} = (1/2)(\chi_a+\chi')$ and
\[
X:= \mathop{\max_{v \in S_1(x_0;m,n)}}_{w \in S_2(x_0;m,n)} \frac
{|\tau(v,w)-\mathbb{E} \tau(v,w)|}{|w-v|^{\hat{\chi}}}.
\]
By the fact that $\hat{\chi}>\chi_a$, for some $C_4$ and $C_5$,
\begin{eqnarray*}
\mathbb{E}e^{\alpha X} &\leq&\mathop{\sum_{v \in S_1(x_0;m,n)}}_{w
\in S_2(x_0;m,n)}
\mathbb{E} \biggl( \exp \biggl[\alpha\frac{|\tau
(v,w)-\mathbb{E}\tau(v,w)|}{|w-v|^{\hat{\chi}}} \biggr] \biggr)
\\
&\leq& C_4\bigl|S_1(x_0;m,n)\bigr|^2
\\
&\leq& C_5 m^{2(d-1)}.
\end{eqnarray*}
Since $\alpha>0$ and $X$ is positive, we may use Jensen's inequality
to get
%
%
\begin{eqnarray}
\label{eqpizza} e^{\alpha\|X\|_2} &=& 1 + \alpha\|X\|_2 + \sum
_{n=2}^\infty\frac
{(\alpha\|X\|_2)^n}{n!}\nonumber
\\
&\leq& 1 + \alpha\|X\|_2 + \mathbb{E}\sum_{n=2}^\infty \frac{(\alpha
X)^n}{n!}
\\
&\leq& \alpha\|X\|_2 + \mathbb{E}e^{\alpha X}. \nonumber
\end{eqnarray}
Because $e^{\alpha t} \geq2\alpha t$ for all $t \in\mathbb{R}$, it
cannot be that $\alpha\|X\|_2$ is the maximum of the two terms on the
right-hand side of (\ref{eqpizza}). Thus an upper bound is $2\mathbb
{E}e^{\alpha X}$, and taking logarithms of both sides, we find $\|X\|_2
\leq\frac{1}{\alpha} \log2\mathbb{E}e^{\alpha X}$. So $\|X\|_2 \leq C_6
\log m$ for some $C_6$. Let
\[
X':= \mathop{\max_{v \in S_1(x_0;m,n)}}_{w \in S_2(x_0;m,n)} \bigl|\tau
(v,w)-\mathbb{E}\tau(v,w)\bigr|.
\]
Since $|w-v| \leq2n$ for all $v \in S_1(x_0;m,n)$ and $w \in
S_2(x_0;m,n)$, $X' \leq C_7n^{\hat{\chi}}X$. Therefore, $\|X'\|_2 \leq
C_8n^{\chi'}$. We finish by putting this together with
(\ref{eqsummand3bound}) and~(\ref{eqsummand2bound}),
\[
\bigl\|Y(x_0;m,n)\bigr\|_2 \leq C_3
n^{1-\kappa}m^\kappa+ C_9n^{\chi'}.
\]\upqed
\end{pf}

To end the section, we give one general lemma about random variables.
Denote by $I(A)$ the indicator function of the event $A$.
%
\begin{lem}\label{lemrvs}
Let $X$ and $Y$ be random variables with $\|X\|_4,\|Y\|_4<\infty$, and
let $B$ be an event such that
\[
(X-Y)I(B) = 0\qquad\mbox{almost surely.}
\]
Then
%
%
\begin{equation}
\label{eqrvs} |\operatorname{Var}X -\operatorname{Var}Y| \leq\bigl(\|X
\|_4+\|Y\|_4\bigr)^2 \mathbb{P}
\bigl(B^c\bigr)^{1/4}.
\end{equation}
\end{lem}

\begin{pf}
Let $\widetilde X = X - \mathbb{E} X$ and $\widetilde Y = Y-\mathbb {E}
Y$. The left-hand side of (\ref{eqrvs}) equals
\begin{eqnarray*}
\bigl\llvert \|\widetilde X\|_2^2 - \|\widetilde Y
\|_2^2\bigr\rrvert &=& \bigl| \| \widetilde X\|_2 - \|\widetilde Y\|_2\bigr|
\bigl|\|\widetilde X\|_2 + \|\widetilde Y\|_2\bigr|
\\
&\leq& \|X-Y\|_2 \bigl(\|X\|_2 + \|Y\|_2\bigr)
\\
&\leq&\bigl\|(X-Y)I\bigl(B^c\bigr)\bigr\|_2 \bigl(\|X\|_4 +\|Y\|_4\bigr)
\\
&\leq& \|X-Y\|_4\bigl(\|X\|_4+\|Y\|_4\bigr) \mathbb{P}\bigl(B^c\bigr)^{1/4},
\end{eqnarray*}
which implies the lemma.
\end{pf}

\subsection{Cylinder construction}\label{cylinder}
Pick $x_0$ of unit norm and $H_0$ a hyperplane as in Definition
\ref{def1}. Fix an orthonormal basis $x_1,\ldots, x_{d-1}$ of $H_0$.
Let $T_1(x_0;n) = \tau(0, nx_0)$, $T_2(x_0;n,\xi') = \tau(4n^{\xi'}x_1,
nx_0+4n^{\xi'}x_1)$ and
\[
\delta T\bigl(x_0;n,\xi'\bigr) = T_1(x_0;n)
- T_2\bigl(x_0;n,\xi'\bigr).
\]
The idea will be to give a lower bound for the variance of $\delta T$
(Section~\ref{subseclowerbound}) and then an upper bound
(Section~\ref{subsecupperbound}). Comparing them, we obtain the desired
inequalities, (\ref{KPZu}) and (\ref{ADrelation}). This idea was
introduced by Licea, Newman and Piza in~\cite{LNP} and also used in
\cite{Johansson2} and \cite{Wuthrich}.

\subsubsection{\texorpdfstring{Lower bound on $\operatorname{Var}\delta T$}
{Lower bound on Var delta T}}\label{subseclowerbound}
We will now assume that
%
%
\begin{equation}
\label{eqassumption2} \xi_a<1\quad\mbox{and}\quad
\chi_b >0
\end{equation}
so that we can choose $\xi'$ and $\chi''$ such that
%
%
\begin{equation}
\label{eqxichiprime} \xi_a<\xi'<1\quad\mbox{and}\quad 0<
\chi''<\chi_b.
\end{equation}

%
\begin{prop}
Assume (\ref{eqassumption2}). For each $\xi'$ and $\chi''$ chosen as in
(\ref{eqxichiprime}), there exists $C=C(\xi',\chi'')$ such that for all
$n$,
\[
\operatorname{Var}\delta T\bigl(x_0; n,\xi'\bigr)
\geq Cn^{2\chi''}.
\]
\end{prop}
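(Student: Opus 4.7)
The plan is to introduce restricted passage times inside two parallel, disjoint cylinders so that the restricted quantities are independent by construction, transfer the lower bound on variance from \eqref{below} to the restricted versions using Lemma~\ref{lem: rvs}, and then compare $\delta T$ with the independent difference through one more application of the same lemma. Concretely, let $C_1$ denote the set of vertices of $\Z^d$ at Euclidean distance at most $n^{\xi'}$ from the segment $[0, nx_0]$, and let $C_2$ be the analogous set around the translated segment $[4n^{\xi'}x_1,\,nx_0 + 4n^{\xi'}x_1]$. Since the two axes are parallel, their separation in the direction $x_1$ is $4n^{\xi'}$, and each radius is $n^{\xi'}$, the cylinders $C_1$ and $C_2$ share no vertex and no edge of $\Z^d$ for every $n$ large. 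Define $\tilde T_i$ to be the minimum passage time between the two endpoints of the $i$-th segment over paths whose edges lie entirely in $C_i$. Then $\tilde T_1$ and $\tilde T_2$ depend on disjoint families of edge weights, hence are independent. Let $G_i$ be the event that every geodesic realizing $T_i$ is contained in $C_i$; on $G_i$ one has $T_i = \tilde T_i$, and on $G := G_1 \cap G_2$, $\delta T = \tilde T_1 - \tilde T_2$.

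The key probabilistic input is that $\prob(G^c)$ decays faster than every polynomial. By translation invariance $\prob(G_i^c) \leq \prob(D(0, nx_0) > n^{\xi'})$, and choosing $\xi''$ with $\xi_a < \xi'' < \xi'$, Markov's inequality applied to \eqref{above} gives $\prob(D(0, nx_0) > n^{\xi'}) \leq C\exp(-\alpha n^{\xi'-\xi''})$ for some constants $\alpha, C > 0$. Moreover, combining \eqref{above} with Lemma~\ref{lem: alexander} yields $\E T_i = O(n)$ and $\|T_i - \E T_i\|_p = O(n^{\chi'})$ for every $p$, hence $\|T_i\|_4 \leq Cn$. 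An analogous polynomial bound $\|\tilde T_i\|_4 \leq Cn^{q}$ for some $q$ follows by comparing $\tilde T_i$ with the passage time along a fixed deterministic lattice path inside $C_i$, using minimal integrability of the edge weights.

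With these ingredients, apply Lemma~\ref{lem: rvs} first with $(X, Y, B) = (T_i, \tilde T_i, G_i)$: the inequality gives $|\Var T_i - \Var \tilde T_i| \leq (\|T_i\|_4 + \|\tilde T_i\|_4)^2 \prob(G_i^c)^{1/4}$, whose right side is $o(n^{2\chi''})$ by the bounds above. Combined with $\Var T_i \geq c n^{2\chi''}$, which follows from \eqref{below} and $\chi'' < \chi_b$, this yields $\Var \tilde T_i \geq (c/2) n^{2\chi''}$ for $n$ large. Independence then gives $\Var(\tilde T_1 - \tilde T_2) = \Var \tilde T_1 + \Var \tilde T_2 \geq c n^{2\chi''}$. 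A second application of Lemma~\ref{lem: rvs} with $(X, Y, B) = (\delta T, \tilde T_1 - \tilde T_2, G)$ yields $|\Var \delta T - \Var(\tilde T_1 - \tilde T_2)| = o(n^{2\chi''})$, and hence $\Var \delta T \geq (c/2) n^{2\chi''}$ for $n$ large. The only delicate step is the polynomial $L^4$ bound on the restricted passage times $\tilde T_i$; this is the only place where some minimal integrability beyond \eqref{above} and \eqref{below} is needed, and it is routine under the standing hypotheses on $\mu$.
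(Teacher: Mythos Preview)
Your argument is correct and follows essentially the same route as the paper: restrict each passage time to a cylinder of radius $n^{\xi'}$, use the exponential tail on $D(0,nx_0)$ together with Lemma~\ref{lem: rvs} (and the deterministic-path bound on $\|\cdot\|_4$) to swap between the true and restricted variables, and exploit independence of the restricted times to pick up the $n^{2\chi''}$ lower bound from \eqref{below}. The only cosmetic differences are the use of Euclidean versus $\ell_\infty$ radius and the order in which you apply Lemma~\ref{lem: rvs}, neither of which affects the argument.
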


\begin{pf}
Define $\mathcal{C}_1$ as the set of points in $\mathbb{R}^d$ at most
$\ell_\infty$ distance $n^{\xi'}$ away from the line segment
connecting 0 to $nx_0$. Define $\mathcal{C}_2$ as the set of points at
most $\ell_\infty$ distance $n^{\xi'}$ away from the line segment
connecting $4n^{\xi'}x_1$ to $nx_0+4n^{\xi'}x_1$. Let $T_1(x_0;n)'$
and $T_2(x_0;n,\xi')'$ be as follows:
\begin{enumerate}[(2)]
\item[(1)] $T_1(x_0;n)'$ is the passage time from $0$ to $nx_0$ using
only edges with endpoints in $\mathcal{C}_1$.

\item[(2)] $T_2(x_0;n,\xi')'$ is the passage time from $4n^{\xi
'}x_1$ to $nx_0+4n^{\xi'}x_1$ using only edges with endpoints in
$\mathcal{C}_2$.
\end{enumerate}
Let $B$ be the event $\{T_1(x_0;n)=T_1(x_0;n)'$ and $T_2(x_0;n,\xi') =
T_2(x_0;n,\xi')'\}$. Note that if $T_1(x_0;n) \neq T_1(x_0;n)'$, then
$D(0,nx_0) \geq n^{\xi'}$. A similar statement holds for
$T_2(x_0;n,\xi')$ and $T_2(x_0;n, \xi')'$. Therefore, $\mathbb{P}(B^c)
\leq2 \mathbb{P}(D(0,nx_0) \geq n^{\xi'})$. Picking $\xi'' = (1/2)(\xi'
+ \xi_a)$, so that $\xi_a<\xi''<\xi'<1$, we find from the definition of
$\xi_a$ [from (\ref{above})] that there exists $C_1>0$ such that for
all $n$, $\mathbb{P}(D(0,nx_0) \geq n^{\xi'}) \leq
e^{-C_1n^{\xi'-\xi''}}$. Therefore
%
%
\begin{equation}
\label{eqprobbc} \mathbb{P}\bigl(B^c\bigr) \leq2e^{-C_1n^{\xi'-\xi''}}.
\end{equation}

By Lemma~\ref{lemrvs} with $X=\delta T(x_0;n,\xi')$ and $Y=\delta
T(x_0;n,\xi')':= T_1(x_0;n)' - T_2(x_0;n,\xi')'$:
\begin{eqnarray*}
&& \operatorname{Var}\delta T\bigl(x_0;n,\xi'\bigr) -
\operatorname{Var}\delta T\bigl(x_0;n,\xi'
\bigr)'
\\
&&\qquad \geq -\bigl(\bigl\|\delta T\bigl(x_0;n,\xi'\bigr)
\bigr\|_4 + \bigl\|\delta T\bigl(x_0;n,\xi'
\bigr)'\bigr\|_4\bigr)^2 \mathbb{P}\bigl(B^c\bigr)^{1/4}
\\
&&\qquad \geq- C_2n^2 e^{-C_1/4n^{\xi'-\xi''}}
\end{eqnarray*}
for some $C_2$. Here we have used inequality (\ref{eqprobbc}) and that
each $\delta T$ is a difference of two passage times, each of which has
$L^4$ norm bounded above by $C n$ (compare, e.g., to a deterministic
path). Therefore, there exists $C_3$ such that for all $n$,
%
%
\begin{equation}
\label{eqlast1} \operatorname{Var}\delta T\bigl(x_0;n,
\xi'\bigr) \geq\operatorname{Var}\delta T\bigl(x_0;n,
\xi'\bigr)' - C_3.
\end{equation}
But $\delta T(x_0;n,\xi')'$ is the difference of i.i.d. random
variables distributed as $T_1(x_0;n)'$, so
%
%
\begin{equation}
\label{eqlast2} \operatorname{Var}\delta T\bigl(x_0;n,
\xi'\bigr)' = 2 \operatorname{Var} T_1(x_0;n)'.
\end{equation}
By exactly the same argument as that given above, we can find $C_4$
such that for all~$n$,
\[
\operatorname{Var} T_1(x_0;n)' \geq
\operatorname{Var}T_1(x_0;n) - C_4 =
\operatorname{Var}\tau(0,nx_0) - C_4.
\]
Using the definition of $\chi''$, we can find another $C_5$ such that
for all $n$, $\operatorname{Var} \tau(0,nx_0) \geq
C_5n^{2\chi''}$. Combining this with (\ref{eqlast2}) and
(\ref{eqlast1}), we complete the proof.
\end{pf}

\subsubsection{\texorpdfstring{Upper bound on $\operatorname{Var}\delta T$}
{Upper bound on Var delta T}}\label{subsecupperbound}

In this section we continue to assume (\ref{eqassumption2}) and we work
with the same choice of $\xi'$ that satisfies (\ref{eqxichiprime}). We
will prove the following.

%
\begin{prop}
Assume (\ref{eqassumption2}) and that (\ref{eqcurvatureassumption})
holds for some $C,\varepsilon_{x_0}$ and $\kappa $. For each $\beta$
satisfying $\xi'<\beta<1$ and each $\chi'>\chi _a$, there exists
$C=C(\beta,\chi')$ such that for all $n$,
\[
\operatorname{Var}\delta T\bigl(x_0;n,\xi'\bigr) \leq
Cn^{2\beta(1-\kappa
)+2\xi'\kappa} + Cn^{2\beta\chi'}.
\]
\end{prop}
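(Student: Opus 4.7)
The plan is to formalize the heuristic of Section~2.1 using Proposition~\ref{prop: sourav}. Since $T_1(x_0;n)$ and $T_2(x_0;n,\xi')$ are identically distributed, $\E\,\delta T=0$ and $\Var~\delta T=\E(\delta T)^2$. The strategy is to bound $|\delta T|$ on a high-probability event by a sum of random variables each distributed as $X(x_0;Rn^{\xi'},n^\beta)$ for a suitable constant $R$, and then invoke Proposition~\ref{prop: sourav}.

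Fix a constant $R\geq 5$ so that the ball of radius $Rn^{\xi'}$ around $0$ in $H_0$ contains both $0$ and $4n^{\xi'}x_1$. Let $B_1$ be the cylinder of length $n^\beta$ and transversal radius $Rn^{\xi'}$ with left face contained in $H_0$, and let $B_2$ be the translate of $B_1$ by $(n-n^\beta)x_0$, so that its right face passes through $nx_0$ and $nx_0+4n^{\xi'}x_1$. Let $\gamma_1,\gamma_2$ be (arbitrarily chosen) geodesics realizing $T_1$ and $T_2$ respectively, and let $G$ be the event that both $\gamma_1$ and $\gamma_2$ stay within $\ell_\infty$-distance $n^{\xi'}$ of the line segments joining their endpoints. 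By translation invariance and the definition of $\xi_a$ in \eqref{above}, choosing $\xi''\in(\xi_a,\xi')$ gives $\mathbb{P}(G^c)\leq 2e^{-cn^{\xi'-\xi''}}$.

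On $G$, since the total transversal deviation of $\gamma_2$ from the axis of $B_1$ is at most $4n^{\xi'}+n^{\xi'}=5n^{\xi'}\leq Rn^{\xi'}$, the path $\gamma_2$ stays inside $B_1$ until it exits through the right face; let $a_1$ be its last exit point and define $a_2\in\partial B_2$ analogously as the first entry point. Let $s_1,s_2$ be the portions of $\gamma_2$ inside $B_1,B_2$, and let $\pi_1,\pi_2$ be geodesics from $0$ to $a_1$ and from $a_2$ to $nx_0$. Concatenating $\pi_1$, the portion of $\gamma_2$ from $a_1$ to $a_2$, and $\pi_2$ produces a path from $0$ to $nx_0$, so by the geodesic property of $\gamma_1$,
\begin{equation*}
\delta T \leq \bigl[\tau(0,a_1)-\tau(4n^{\xi'}x_1,a_1)\bigr]+\bigl[\tau(a_2,nx_0)-\tau(a_2,nx_0+4n^{\xi'}x_1)\bigr].
\end{equation*}
Each bracket is a difference of passage times between a point on the left face and a point on the right face of a translate of the $(n^\beta,Rn^{\xi'})$-cylinder, and by translation invariance of the law of the weights, each is bounded in absolute value by a copy of $X(x_0;Rn^{\xi'},n^\beta)$. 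A symmetric construction around the endpoints of $\gamma_1$ yields the matching bound for $-\delta T$, so on $G$ the quantity $|\delta T|$ is bounded by the sum of four copies of $X(x_0;Rn^{\xi'},n^\beta)$.

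Combining, $\E[(\delta T)^2 I_G]\leq 16\,\E[X(x_0;Rn^{\xi'},n^\beta)^2]$, while the contribution from $G^c$ is controlled by Cauchy--Schwarz and the crude moment bound $\|\delta T\|_4\leq Cn$ (from comparison to a deterministic path), giving $\E[(\delta T)^2 I_{G^c}]\leq Cn^2\sqrt{\mathbb{P}(G^c)}$, which is negligible. Applying Proposition~\ref{prop: sourav} with cylinder length $n^\beta$ and radius $Rn^{\xi'}$ (its hypothesis $Rn^{\xi'}\leq(\e_{x_0}/2\sqrt{d-1})n^\beta$ holds for large $n$ since $\beta>\xi'$) yields
\begin{equation*}
\|X(x_0;Rn^{\xi'},n^\beta)\|_2 \leq C\,n^{\beta(1-\kappa)+\xi'\kappa}+C\,n^{\beta\chi'},
\end{equation*}
and squaring gives the proposition. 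The main obstacle is the geometric bookkeeping: choosing $R$ large enough that both endpoints of $\gamma_1$ and $\gamma_2$ lie on the left (resp.\ right) face of $B_1$ (resp.\ $B_2$) and that the transversal fluctuations on $G$ never push the geodesics outside the radius of the cylinders, so that the exit/entry points $a_1,a_2$ lie on the appropriate faces and the concatenated competitor path is valid.
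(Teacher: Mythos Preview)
Your argument is correct and follows essentially the same route as the paper: bound $|\delta T|$ on a high-probability event by copies of $X(x_0;O(n^{\xi'}),n^\beta)$ via geodesic concatenation and then invoke Proposition~\ref{prop: sourav}; the paper only packages the bad event differently, introducing auxiliary restricted times $T_i''$ (paths forced to cross fixed hyperplanes inside the cylinders) and transferring with Lemma~\ref{lem: rvs} in place of your direct Cauchy--Schwarz split. One minor caution: with your definitions the ``first entry to $B_2$'' could in principle precede the ``last exit from $B_1$'' if $\gamma_2$ oscillates longitudinally, so take $a_2$ to be the first entry to $B_2$ \emph{after} $a_1$ (the paper sidesteps this by taking both points at first crossings of the fixed hyperplanes $H_1=n^\beta x_0+H_0$ and $H_2=(n-n^\beta)x_0+H_0$, where the ordering is automatic).
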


\begin{pf}
Define the hyperplanes
\[
H_1 = n^\beta x_0 + H_0\quad\mbox{and}\quad H_2 = \bigl(n-n^\beta\bigr)x_0 +
H_0.
\]
Let $\mathcal{C}_1$ and $\mathcal{C}_2$ be as in the proof of the lower
bound. For two points $a$ and $b$ in~$\mathbb{R}^d$, let $S(a,b)$ be
the set of finite paths $P$ from $a$ to $b$ (or their closest lattice
points) such that for both $i=1$ and $2$, $P \cap H_i \cap
[\mathcal{C}_1 \cup\mathcal{C}_2 ] \neq\varnothing $. Define
$T_1(x_0;n)''$, $T_2(x_0;n,\xi')''$ as follows:
\begin{enumerate}[(2)]
\item[(1)] $T_1(x_0;n)''$ is the minimum passage time of all paths in
$S(0,nx_0)$.

\item[(2)] $T_2(x_0;n,\xi')''$ is the minimum passage time of all
paths in $S(4n^{\xi'}x_1,nx_0+4n^{\xi'}x_1)$.
\end{enumerate}
Again we set $B$ equal to the event $\{T_1(x_0;n) = T_1(x_0;n)''$ and
$T_2(x_0;n,\xi') = T_2(x_0;n,\xi')''\}$. Because $B^c$ implies that
$D(0,nx_0) \geq n^{\xi'}$ [or the corresponding statement for
$T_2(x_0;n,\xi')$], we may choose $C_1$ such that for all $n$,
$\mathbb{P}(B^c) \leq2e^{-C_1n^{\xi'-\xi''}}$, where $\xi'' =
(1/2)(\xi'+\xi_a)$. Therefore, we can argue exactly as in the previous
section to find $C_2$ such that for all $n$,
%
%
\begin{equation}
\label{eqdecompose} \operatorname{Var}\delta T\bigl(x_0;n,
\xi'\bigr) \leq\operatorname{Var}\delta T\bigl(x_0;n,
\xi'\bigr)'' + C_2,
\end{equation}
where $\delta T(x_0;n,\xi')'' = T_1(x_0;n)''-T_2(x_0;n,\xi')''$.

For almost every passage time realization, we may define a path $\gamma
_1 \in S(0,nx_0)$ (in a measurable and deterministic way when there are
not unique geodesics) from~0 to $nx_0$ so that $\tau(\gamma_1) =
T_1(x_0;n)''$ and $\gamma_2 \in$ $S(4n^{\xi'}x_1, nx_0 + 4n^{\xi
'}x_1)$ such that $\tau(\gamma_2) = T_2(x_0;n,\xi')''$. Let $a_1$ be
the last lattice point on $\gamma_2$ before it intersects $H_1 \cap
(\mathcal{C}_1 \cup\mathcal{C}_2)$ and $a_2$ the last lattice point of
$\gamma_2$ before it intersects $H_2 \cap(\mathcal{C}_1 \cup
\mathcal{C}_2)$. Similarly let $a_1'$ be the last lattice point of
$\gamma_1$ before it intersects $H_1 \cap(\mathcal{C}_1 \cup
\mathcal{C}_2)$ and $a_2'$ the last lattice point of $\gamma_1$ before
it intersects $H_2 \cap(\mathcal{C}_1 \cup\mathcal{C}_2)$.
Write\vspace*{-1pt} $s_1$ for the piece of $\gamma_2$ (seen\vspace*{1pt} as an
oriented path) from $4n^{\xi'}x_1$ to $a_1$, $t_2$ for the piece of
$\gamma_2$ from $a_1$ to $a_2$ and $s_2$ for the piece of $\gamma_2$
from $a_2$ to $nx_0 + 4n^{\xi'}x_1$. Similarly, write $s_1'$ for the
piece of $\gamma_1$ from 0 to $a_1'$ and $s_2'$ for the piece of
$\gamma_1$ from $a_2'$ to $nx_0$. By definition of $T_1(x_0;n)''$, we
have the following almost surely:
%
%
\begin{eqnarray}\label{eqtupperbound}
\qquad T_1(x_0;n)'' &\leq&
\tau(0,a_1) + \tau(t_2) + \tau(a_2,ne_1)
\nonumber\\[-8pt]\\[-8pt]
&=& \tau(0,a_1) - \tau(s_1) + \tau(a_2,ne_1)
- \tau(s_2) + T_2\bigl(x_0;n,\xi'\bigr)''.\nonumber
\end{eqnarray}

Set $H_3 = nx_0 + H_0$, and let $\mathcal{C}$ be the set of all points
in $\mathbb{R}^d$ that are $\ell_\infty$ distance at most $5n^{\xi '}$
from the line segment connecting 0 to $nx_0$. Last, let $V_i = H_i
\cap\mathcal{C}$ for $i=0,\ldots, 3$ and
\[
X_i\bigl(n,\xi',\beta\bigr) = \mathop{\max
_{v_1,v_2 \in V_{2i}}}_{w_1,w_2 \in
V_{2i+1}} \bigl|\tau(v_1,w_1)-
\tau(v_2,w_2)\bigr|,\qquad i=0,1.
\]
Using this notation and (\ref{eqtupperbound}), we can give an upper
bound for $T_1(x_0;n)''$ of
\[
T_1(x_0;n)'' \leq T_2\bigl(x_0;n,\xi'\bigr)''+ X_0\bigl(n,\xi',\beta\bigr) + X_1\bigl(n,\xi ',\beta\bigr).
\]
To bound $T_2(x_0;n,\xi')''$, we can similarly write
\begin{eqnarray*}
T_2\bigl(x_0;n,\xi'\bigr)'' &\leq& \tau\bigl(4n^{\xi'}x_1,a_1'
\bigr) - \tau\bigl(s_1'\bigr)
\\
&&{} + \tau \bigl(a_2',nx_0+4n^{\xi'}x_1
\bigr) - \tau\bigl(s_2'\bigr) + T_1(x_0;n)'.
\end{eqnarray*}
Therefore, $T_2(x_0;n,\xi')'' \leq X_0(n,\xi',\beta) + X_1(n,\xi
',\beta) + T_1(x_0;n)''$. Putting these together,
\[
\bigl|\delta T\bigl(x_0;n,\xi'\bigr)''\bigr|
\leq X_0\bigl(n,\xi',\beta\bigr) + X_1
\bigl(n,\xi',\beta \bigr)\qquad\mbox{almost surely}
\]
and consequently
\[
\operatorname{Var}\delta T\bigl(x_0;n,\xi'
\bigr)'' \leq\bigl\|\delta T\bigl(x_0;n,\xi
'\bigr)''\bigr\|_2^2
\leq 2\bigl(\bigl\|X_0\bigl(n,\xi',\beta\bigr)
\bigr\|_2^2 + \bigl\|X_1\bigl(n,\xi',\beta
\bigr)\bigr\|_2^2\bigr).
\]
The variables $X_0$ and $X_1$ are identically distributed, so
$\operatorname{Var} \delta T(x_0;n,\xi')''
\leq4\|X_0(n,\xi',\beta)\|_2^2$. Finally, we combine with
(\ref{eqdecompose}) to get
%
%
\begin{equation}
\label{eqfinal23} \operatorname{Var}\delta T\bigl(n,\xi'\bigr)
\leq4 \bigl\|X_0\bigl(n,\xi',\beta\bigr)\bigr\|
_2^2 +C_2.
\end{equation}

The last step is to invoke Proposition~\ref{propsourav}. The variable
$X_0(n,\xi',\beta)$ is the same as $X(x_0;5n^{\xi'},n^\beta )$ there.
Because $\beta$ was chosen to be larger than $\xi'$, the condition
$5n^{\xi'} \leq\frac{\varepsilon_{x_0}}{2\sqrt {d-1}}n^{\beta}$ holds
for large $n$. Thus there exists $C_3$ such that for all large $n$,
\[
\bigl\|X_0\bigl(n,\xi',\beta\bigr)\bigr\|_2^2
\leq C_3 n^{2\beta(1-\kappa)}n^{2\xi'
\kappa} + C_3n^{2\beta\chi'},
\]
where $\kappa$ is from the statement of this proposition. With
(\ref{eqfinal23}), this completes the proof.
\end{pf}

\subsection{\texorpdfstring{Proof of Theorem \protect\ref{theorem1}}
{Proof of Theorem 2.1}}

We now prove Theorem \ref{theorem1}. Assume that $\chi_a=\chi_b=\chi
> 0$. Further, we may assume $\xi_a<1$ because if $\xi_a=1$, the
relation holds by the bound $\chi\leq1/2$; see Remark~\ref{remtrivial}.

Choose $|x_0|=1$ such that (\ref{eqcurvatureassumption}) holds for some
$\varepsilon_{x_0}$ and $C_{x_0}>0$ for $\kappa=2$. (The existence of
such a point is proved in \cite{Sourav}, Proposition~5.1.) From the
previous two sections, for each choice of $\chi ',\chi'',\xi'$ and
$\beta$ satisfying
%
%
\begin{equation}
\label{eqequation1} 0<\chi''< \chi<
\chi'\quad\mbox{and}\quad \xi_a<\xi'<\beta<1,
\end{equation}
there exist constants $C_i= C_i(\chi',\chi'',\xi',\beta)$ ($i=1,2$)
such that for all $n$,
\[
C_1n^{2\chi''} \leq\operatorname{Var}\delta T\bigl(n,
\xi'\bigr) \leq C_2n^{-2\beta+ 4\xi'} +C_2n^{2\beta\chi'}.
\]
For any $\beta$ with $\xi'<\beta<1$, we may choose $\chi'' = \chi
''(\beta)$ and $\chi'= \chi'(\beta)$ that satisfy~(\ref{eqequation1})
and are so close to $\chi$ that $2\beta\chi' < 2\chi ''$. For such a
choice of $\chi''$ and $\chi'$ we then have
\[
(1/2)C_1n^{2\chi''} \leq C_2n^{-2\beta+ 4\xi'}\qquad\mbox{for all large } n
\]
and, therefore, $\chi'' \leq-\beta+ 2\xi'$. Taking $\beta\uparrow1$ and
noting that $\chi''(\beta) \uparrow\chi$, we find $\chi\leq-1 + 2\xi'$.
This is true for all $\xi' > \xi_a$, so $\chi\leq-1 + 2\xi_a$.

\section*{Acknowledgments}
We are very grateful to S. Chatterjee for his advice and comments on
the presentation in an earlier version of the paper. Also we thank
G.~Ben Arous and C. Newman for suggestions on the \hyperref[sec1]{Introduction} and A.
Sapozhnikov for comments and careful readings of the paper.




\printaddresses


\begin{thebibliography}{17}

\bibitem{Alexander}
\begin{barticle}[mr]
\bauthor{\bsnm{Alexander},~\bfnm{Kenneth~S.}\binits{K.~S.}}
(\byear{1997}).
\btitle{Approximation of subadditive functions and convergence rates in
 limiting-shape results}.
\bjournal{Ann. Probab.}
\bvolume{25}
\bpages{30--55}.
\bid{doi={10.1214/aop/1024404277}, issn={0091-1798}, mr={1428498}}
\bptok{imsref}%
\end{barticle}
\endbibitem

\bibitem{AD13}
\begin{bmisc}[auto:STB|2013/10/03|07:39:35]
\bauthor{\bsnm{Auffinger},~\bfnm{Antonio}\binits{A.}} \AND
 \bauthor{\bsnm{Damron},~\bfnm{Michael}\binits{M.}}
(\byear{2013}).
\bhowpublished{The scaling relation $\chi= 2\xi- 1$ for directed polymers in a random
environment. Preprint.}
\bptok{imsref}%
\end{bmisc}
\endbibitem

\bibitem{BenaimRossignol}
\begin{barticle}[mr]
\bauthor{\bsnm{Bena{\"{\i}}m},~\bfnm{Michel}\binits{M.}} \AND
 \bauthor{\bsnm{Rossignol},~\bfnm{Rapha{\"e}l}\binits{R.}}
(\byear{2008}).
\btitle{Exponential concentration for first passage percolation through
 modified {P}oincar\'e inequalities}.
\bjournal{Ann. Inst. Henri Poincar\'e Probab. Stat.}
\bvolume{44}
\bpages{544--573}.
\bid{doi={10.1214/07-AIHP124}, issn={0246-0203}, mr={2451057}}
\bptok{imsref}%
\end{barticle}
\endbibitem

\bibitem{Blair-Stahn}
\begin{bmisc}[auto:STB|2013/10/03|07:39:35]
\bauthor{\bsnm{Blair-Stahn},~\bfnm{N.~D.}\binits{N.~D.}}
(\byear{2010}).
\bhowpublished{First passage percolation and competition models.
Available at \arxivurl{arXiv:1005.0649}.}
\bptok{imsref}%
\end{bmisc}
\endbibitem

\bibitem{Sourav}
\begin{barticle}[mr]
\bauthor{\bsnm{Chatterjee},~\bfnm{Sourav}\binits{S.}}
(\byear{2013}).
\btitle{The universal relation between scaling exponents in first-passage
 percolation}.
\bjournal{Ann. of Math. (2)}
\bvolume{177}
\bpages{663--697}.
\bid{doi={10.4007/annals.2013.177.2.7}, issn={0003-486X}, mr={3010809}}
\bptok{imsref}%
\end{barticle}
\endbibitem

\bibitem{SouravPartha}
\begin{barticle}[mr]
\bauthor{\bsnm{Chatterjee},~\bfnm{Sourav}\binits{S.}} \AND
\bauthor{\bsnm{Dey},~\bfnm{Partha~S.}\binits{P.~S.}}
(\byear{2013}).
\btitle{Central limit theorem for first-passage percolation time across thin cylinders}.
\bjournal{Probab. Theory Related Fields}
\bvolume{156}
\bpages{613--663}.
\bid{doi={10.1007/s00440-012-0438-z}, issn={0178-8051}, mr={3078282}}
\end{barticle}
\bptok{imsref}%
\endbibitem

\bibitem{DurrettLiggett}
\begin{barticle}[mr]
\bauthor{\bsnm{Durrett},~\bfnm{Richard}\binits{R.}} \AND
 \bauthor{\bsnm{Liggett},~\bfnm{Thomas~M.}\binits{T.~M.}}
(\byear{1981}).
\btitle{The shape of the limit set in {R}ichardson's growth model}.
\bjournal{Ann. Probab.}
\bvolume{9}
\bpages{186--193}.
\bid{issn={0091-1798}, mr={0606981}}
\bptok{imsref}%
\end{barticle}
\endbibitem

\bibitem{Howard}
\begin{bincollection}[mr]
\bauthor{\bsnm{Howard},~\bfnm{C.~Douglas}\binits{C.~D.}}
(\byear{2004}).
\btitle{Models of first-passage percolation}.
In \bbooktitle{Probability on Discrete Structures}.
\bseries{Encyclopaedia Math. Sci.}
\bvolume{110}
\bpages{125--173}.
\bpublisher{Springer}, \blocation{Berlin}.
\bid{mr={2023652}}
\bptok{imsref}%
\end{bincollection}
\endbibitem

\bibitem{Johansson2}
\begin{barticle}[mr]
\bauthor{\bsnm{Johansson},~\bfnm{Kurt}\binits{K.}}
(\byear{2000}).
\btitle{Transversal fluctuations for increasing subsequences on the plane}.
\bjournal{Probab. Theory Related Fields}
\bvolume{116}
\bpages{445--456}.
\bid{doi={10.1007/s004400050258}, issn={0178-8051}, mr={1757595}}
\bptok{imsref}%
\end{barticle}
\endbibitem

\bibitem{KZ}
\begin{barticle}[auto:STB|2013/10/03|07:39:35]
\bauthor{\bsnm{Kardar},~\bfnm{M.}\binits{M.}} \AND
 \bauthor{\bsnm{Zhang},~\bfnm{Y.~C.}\binits{Y.~C.}}
(\byear{1987}).
\btitle{Scaling of directed polymers in random media}.
\bjournal{Phys. Rev. Lett.}
\bvolume{56}
\bpages{2087--2090}.
\bptok{imsref}%
\end{barticle}
\endbibitem

\bibitem{Kesten}
\begin{barticle}[mr]
\bauthor{\bsnm{Kesten},~\bfnm{Harry}\binits{H.}}
(\byear{1993}).
\btitle{On the speed of convergence in first-passage percolation}.
\bjournal{Ann. Appl. Probab.}
\bvolume{3}
\bpages{296--338}.
\bid{issn={1050-5164}, mr={1221154}}
\bptok{imsref}%
\end{barticle}
\endbibitem

\bibitem{Kestensurvey}
\begin{bincollection}[mr]
\bauthor{\bsnm{Kesten},~\bfnm{Harry}\binits{H.}}
(\byear{2003}).
\btitle{First-passage percolation}.
In \bbooktitle{From Classical to Modern Probability}.
\bseries{Progress in Probability}
\bvolume{54}
\bpages{93--143}.
\bpublisher{Birkh\"auser}, \blocation{Basel}.
\bid{mr={2045986}}
\bptok{imsref}%
\end{bincollection}
\endbibitem

\bibitem{Krug}
\begin{barticle}[auto:STB|2013/10/03|07:39:35]
\bauthor{\bsnm{Krug},~\bfnm{J.}\binits{J.}}
(\byear{1987}).
\btitle{Scaling relation for a growing surface}.
\bjournal{Phys. Rev. A (3)}
\bvolume{36}
\bpages{5465--5466}.
\bptok{imsref}%
\end{barticle}
\endbibitem

\bibitem{LNP}
\begin{barticle}[mr]
\bauthor{\bsnm{Licea},~\bfnm{C.}\binits{C.}},
 \bauthor{\bsnm{Newman},~\bfnm{C.~M.}\binits{C.~M.}} \AND
 \bauthor{\bsnm{Piza},~\bfnm{M.~S.~T.}\binits{M.~S.~T.}}
(\byear{1996}).
\btitle{Superdiffusivity in first-passage percolation}.
\bjournal{Probab. Theory Related Fields}
\bvolume{106}
\bpages{559--591}.
\bid{doi={10.1007/s004400050075}, issn={0178-8051}, mr={1421992}}
\bptok{imsref}%
\end{barticle}
\endbibitem

\bibitem{NewmanPiza}
\begin{barticle}[mr]
\bauthor{\bsnm{Newman},~\bfnm{Charles~M.}\binits{C.~M.}} \AND
 \bauthor{\bsnm{Piza},~\bfnm{Marcelo S.~T.}\binits{M.~S.~T.}}
(\byear{1995}).
\btitle{Divergence of shape fluctuations in two dimensions}.
\bjournal{Ann. Probab.}
\bvolume{23}
\bpages{977--1005}.
\bid{issn={0091-1798}, mr={1349159}}
\bptok{imsref}%
\end{barticle}
\endbibitem

\bibitem{AW}
\begin{barticle}[mr]
\bauthor{\bsnm{Wehr},~\bfnm{Jan}\binits{J.}} \AND
 \bauthor{\bsnm{Aizenman},~\bfnm{Michael}\binits{M.}}
(\byear{1990}).
\btitle{Fluctuations of extensive functions of quenched random couplings}.
\bjournal{J. Stat. Phys.}
\bvolume{60}
\bpages{287--306}.
\bid{doi={10.1007/BF01314921}, issn={0022-4715}, mr={1069633}}
\bptnote{check year}%
\bptok{imsref}%
\end{barticle}
\endbibitem

\bibitem{Wuthrich}
\begin{barticle}[mr]
\bauthor{\bsnm{W{\"u}thrich},~\bfnm{Mario~V.}\binits{M.~V.}}
(\byear{1998}).
\btitle{Scaling identity for crossing {B}rownian motion in a {P}oissonian
 potential}.
\bjournal{Probab. Theory Related Fields}
\bvolume{112}
\bpages{299--319}.
\bid{doi={10.1007/s004400050192}, issn={0178-8051}, mr={1660910}}
\bptok{imsref}%
\end{barticle}
\endbibitem

\end{thebibliography}
\end{document}